\documentclass{article}

\usepackage{amsmath}
\usepackage{amssymb}
\usepackage{amsthm}
\usepackage{enumerate}
\usepackage{bbm}

\usepackage{multicol}
\usepackage{amsfonts}
\usepackage[colorlinks=true,citecolor=red,linkcolor=blue]{hyperref}

\newtheorem{theorem}{Theorem}[section]
\newtheorem{proposition}[theorem]{Proposition}
\newtheorem{corollary}[theorem]{Corollary}
\newtheorem{lemma}[theorem]{Lemma}

\newtheorem{rmk}[theorem]{Remark}

\newtheorem*{theorem*}{Theorem}
\newtheorem*{corollary*}{Corollary}

\newtheorem{definition}[theorem]{Definition}

\newcommand{\pp}{\mathfrak{p}}

\renewcommand{\O}{\mathcal {O}}
\newcommand{\F}{\mathbb{F}}

\newcommand{\ord}{\operatorname{ord}}

\newcommand{\ldv}{\mathcal{L}_{\rm div}}

\DeclareMathOperator{\ass}{as}
\DeclareMathOperator{\dib}{\operatorname{|}}
\DeclareMathOperator{\Div}{\operatorname{div}}

\newcommand{\prd}{\operatorname{Prod}}

\title{The Diophantine problem for addition and divisibility for rings of $S$-integers of quadratic imaginary extensions of $\mathbb{Q}$}
\author{Natalia Hormaz\'abal, Carlos  Mart\'inez-Ranero\footnote{Universidad de Concepci\'on, Chile (all two authors).}}

\begin{document}
\maketitle

\begin{abstract}
Let $K$ be a quadratic imaginary extension of $\mathbb{Q}$, let $S$ be a finite nonempty set of non archimedean places, and let $\O_{K,S}$ denote the ring of $S$-integers of $K$. We show that there is no algorithm  which solves the following problem. Given an arbitrary system of linear equations over the integers together with divisibility conditions on some of the variables, decide whether or not there exists a solution over  $\O_{K,S}$. This contrasts with  Lipshitz's result, which shows that such algorithm does exists for the ring of integers (i.e. $S=\emptyset).$ \footnote{ The  second named author was partially supported by  Proyecto VRID-Investigación  No. 220.015.024-INV. }
\end{abstract}

Keywords: Hilbert's tenth problem, global fields, decidability, fragments of arithmetic

MSC 2010: Primary: 11U05, Secondary: 03B25
\section{Introduction and main results}
In this paper we  investigate the following problem. Given a commutative ring $R$.  Is there 
 an algorithm for the following decision problem:\\
 Given any sentence of the form 
\begin{equation} \label{eqn:intro}
\exists x_1,\dots, \exists x_n
\bigwedge_{i=1}^k f_i(x_1,\dots,x_n)=0\wedge
\bigwedge_{i=1}^m g_i(x_1,\dots,x_n) \ |\  h_i(x_1,\dots,x_n),
\end{equation}
\noindent where the $f_i$, $g_i$ and $h_i$ are linear polynomials with integer coefficients, decide whether or not  it has a solution over $R$.  

This line of research has a long history, primarily in connection with Hilbert's tenth problem and it is particularly interesting for rings with meaningful arithmetic   such as rings of integers, or more generally, rings of $S$-integers over global fields. We now provide a brief historical overview of the  most relevant results  to our investigation.  Lipshitz \cite{L77}, and in parallel Bel’tyukov \cite{B76}, showed that the above problem is decidable for $\mathbb{Z}$ and  for the ring of integers of a quadratic imaginary extension of $\mathbb{Q}.$ In contrast, Lipshitz \cite{L78b} showed that the  problem for the ring of integers $\mathcal{O}_K$ of any number field other than quadratic imaginary, is as complicated as Hilbert Tenth Problem for $\mathcal{O}_K$ (i.e. considering polynomials of arbitrary degree in the input). The latter is widely believed to be unsolvable. 

Complementing the above results, it was recently shown by Cerda-Romero and Mart\'inez-Ranero \cite{CM17} that if $S$ is a finite and non-empty set of primes, then the problem is undecidable for $\mathbb Z[S^{-1}]$. It also worth mentioning that the question for the ring of $S$-integers of a global function fields was completely settled by Martínez-Ranero, Utreras and Vidaux \cite{muv}.  

In this paper, we will extend the above results to the ring of $S$-integers of  a quadratic imaginary extension of $\mathbb{Q}$. In order to make this precise we need to introduce a few definitions. 

Let $K$ be a quadratic imaginary extension of $\mathbb{Q}$. For any finite nonempty set $S$ of non archimedean places of $K$  we define $\O_{K,S}$ to be the ring of $S$-integers $$\O_{K,S}:=\{\, x\in K\colon \ord_\pp(x)\geq 0, \rm{for\ all}\ \pp\notin S\,\}.$$
Let us consider the language $\mathcal{L}_{\Div}=\{=,0,1,+,\mid\}$ --- here $x\mid y$ will be interpreted as ``$x$ divides $y$''.

In terms of mathematical logic our main result can be stated as follows.

\begin{theorem}{\rm ( Theorem \ref{mainthm})}\label{mainund}
Multiplication is positive-existentially definable over the  $\ldv$-structure $\O_{K,S}$. In particular, the positive-existential theory of this structure is undecidable.
\end{theorem}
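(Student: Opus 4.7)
My plan is to interpret the standard model of arithmetic $(\mathbb{N}, +, \cdot)$ inside the $\ldv$-structure $\O_{K,S}$ by positive-existential formulas, adapting the approach developed in \cite{CM17} for $\mathbb{Z}[S^{-1}]$ and in \cite{muv} for the global function field case. The key new feature distinguishing $\O_{K,S}$ from $\O_K$ is that adjoining $S$-units enlarges the unit group to an infinite finitely generated abelian group, providing the combinatorial flexibility needed to code multiplication; this is precisely what obstructs Lipshitz's decidability proof for $\O_K$.

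I would begin by introducing a handful of visibly positive-existentially definable predicates in $\ldv$: the associate relation $x \sim y$ given by $x \mid y \wedge y \mid x$, and the $S$-unit predicate $U(x) := x \mid 1$. By Dirichlet's $S$-unit theorem and the finiteness of the class number $h$ of $\O_K$, I can produce concrete elements that play the role of uniformizers: for any $\pp \in S$, we have $\pp^h = (\pi)$ for some $\pi \in \O_K$, and this $\pi$ becomes a nontorsion $S$-unit in $\O_{K,S}$. Similarly, picking a prime $\qq \notin S$ yields $\alpha \in \O_K$ with $(\alpha) = \qq^h$, whose nonnegative powers form a totally ordered chain under $\O_{K,S}$-divisibility: $\alpha^m \mid \alpha^n$ iff $m \leq n$. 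Using $\alpha$ as an encoding parameter I would carve out, up to associates, a positive-existentially definable copy of the nonnegative integers carrying the standard divisibility relation, and then apply classical Julia--Robinson style formulas to positive-existentially define multiplication on this copy from addition together with divisibility.

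Finally, to lift multiplication from this internal copy of $\mathbb{N}$ to all of $\O_{K,S}$, I would exploit the essentially unique factorization of ideals of $\O_K$ away from $S$ to encode each element by its exponent data at finitely many primes, thereby reducing the equation $xy = z$ to a conjunction of additive equations among the encoded exponents, which are expressible using the internal multiplication on $\mathbb{N}$. The principal obstacle, where most of the technical work will lie, is the nontriviality of the class group of $\O_K$: prime ideals need not be principal, so one cannot in general choose canonical prime-element representatives for this factorization. The standard remedy is to pass systematically to $h$-th powers (forcing the relevant ideals into the principal subgroup) and to use the norm $N_{K/\mathbb{Q}}$ to transfer exponent information down to $\mathbb{Z}$, at the cost of weaving a number of compatibility conditions into the final positive-existential formula. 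With multiplication so defined, undecidability of the positive-existential theory then follows from the undecidability of Hilbert's Tenth Problem over $\O_{K,S}$.
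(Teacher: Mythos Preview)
Your proposal has a genuine gap at the lifting step. You propose to ``encode each element by its exponent data at finitely many primes'' and thereby reduce $xy=z$ to a conjunction of additive equations among the encoded exponents. But there is no \emph{uniform} finite set of primes that works: an element of $\O_{K,S}$ may be divisible by arbitrarily many primes outside $S$, so no single positive-existential formula can capture its full factorization data. The class-group workaround you mention (passing to $h$-th powers, using norms) addresses non-principality of individual primes, but does nothing about this unboundedness. Consequently the reduction of $xy=z$ to finitely many exponent equations cannot be carried out uniformly over all of $\O_{K,S}$, and the theorem as stated asks precisely for a definition of multiplication on the whole ring.

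There is also a secondary gap earlier on: you do not say how the set $\{\alpha^n : n\ge 0\}$ (or its associate-closure) is itself positive-existentially definable in $\ldv$. The observation $\alpha^m \mid \alpha^n \iff m\le n$ gives an ordering \emph{once you are on that set}, but you first need a formula carving out exactly the $\alpha$-powers among all $S$-integers, and this is not automatic.

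Finally, your description of \cite{CM17} and \cite{muv} is not accurate: those papers do not interpret $\mathbb N$ first and then lift. They define the square function \emph{directly} on the whole ring via divisibility conditions of the shape $x\pm\varepsilon \mid y-\varepsilon^2$ for carefully chosen large $S$-units $\varepsilon$, and the present paper follows the same template. The mechanism is that $x\pm\varepsilon$ always divides $x^2-\varepsilon^2$, so $x\pm\varepsilon \mid y-\varepsilon^2$ forces $x\pm\varepsilon \mid y-x^2$; one then uses size estimates (here, at the archimedean place of the imaginary quadratic field) to ensure $x\pm\varepsilon$ is too large to divide any nonzero $y-x^2$ of the relevant height. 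This strategy sidesteps entirely the need to encode elements by exponent vectors, and it is exactly the idea your proposal is missing.
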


The  proof follows the usual pattern, but is technically much more involved.  First we show that being different from zero is positive-existentially
definable. This step turns out to be quite technical and requires a deep Theorem of
Lenstra \cite{Len77} from analytic number theory. Next we show that there is an abundance of units of arbitrary large "height". Finally, we provide a positive-existentially definition of the square
function on $\O_{K,S}$. This is the hardest part. The formulas we use are quite similar to
the ones used in \cite{CM17} and \cite{muv}. However, the proofs are more technically demanding as our rings are (usually) not PID and we also have to deal with archimedean places.

The structure of the rest of paper is as follows. In Section 2  we prove that being different from zero is positive-existentially definable. In Sections 3 we show that multiplication for units is definable. Section 4 is the technical hearth of the paper and we show the existence  of arbitrarily large units. Finally, in Section 5 we prove Theorem \ref{mainthm}.

\section{Definability of ``to be distinct''}
Let $K$ be a quadratic imaginary extension of $\mathbb{Q}$ and let $S$ be a finite nonempty set of non archimedean places of $K$, which we consider fixed for the rest of the paper.
The main goal of this section is to prove that the relation ``different from $0$'' is positive-existentially definable in the $\ldv$-structure $\O_{K,S}$.  In order to do this, we need to recall some results and a few definitions.

\begin{definition}
Let $M_K^*$ denote the set of all non-archimedean places of $K$, and let  $M\subseteq M_K^\star$. The upper and lower \emph{Dirichlet density} of $M$ are defined by 
$$
\delta_{+}(M)=\limsup\limits_{s\to 1^+}\frac{\sum_{\pp\in M} q_\pp^{-s}}{ \sum_{\pp\in M_K^\star} q_\pp^{-s}},
$$

$$
\delta_{-}(M)=\liminf\limits_{s\to 1^+}\frac{\sum_{\pp\in M} q_\pp^{-s}}{ \sum_{\pp\in M_K^\star} q_\pp^{-s}},
$$
 where $q_\pp$ is the size of the residue field at $\pp$. If $\delta_{+}(M)=\delta_{-}(M)$, then we denote by  $\delta(M)$  the common value and say that it has Dirichlet density.
\end{definition}

 We could not find a proper reference for the following easy lemma (see for instance \cite[Ex. 2.3]{Con04}). 

\begin{lemma}\label{lemmaconrad}
 The set $M^{\rm un}$ of non-archimedean places $\pp$ on $K$ unramified over $\mathbb{Q}$, and such that the residue field degree $f(\pp)=[k(\pp)\colon k(P)]$ is $1$ (where $P$ is the prime below $\pp$), has Dirichlet density $1$.
\end{lemma}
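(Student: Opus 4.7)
The plan is to analyze the decomposition of rational primes in the quadratic imaginary extension $K/\mathbb{Q}$ and then do a direct asymptotic comparison of the sums defining the density.

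First I would recall that, since $[K:\mathbb{Q}]=2$, every non-archimedean prime $P$ of $\mathbb{Q}$ decomposes in exactly one of three ways: (i) $P$ \emph{splits}, i.e.\ $P\mathcal{O}_K=\pp_1\pp_2$ with two distinct primes of residue degree $1$ and ramification index $1$; (ii) $P$ is \emph{inert}, i.e.\ $P\mathcal{O}_K=\pp$ with $f(\pp)=2$, $e(\pp)=1$; (iii) $P$ \emph{ramifies}, i.e.\ $P\mathcal{O}_K=\pp^2$ with $f(\pp)=1$, $e(\pp)=2$. Only the finitely many primes dividing the discriminant of $K/\mathbb{Q}$ ramify. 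From this classification it is immediate that the set $M^{\rm un}$ of unramified places with $f(\pp)=1$ is exactly the set of primes of $K$ lying above a split rational prime, with two primes of $K$ sitting above each such $P$.

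Next, I would recall the well-known analytic fact that the split primes of $\mathbb{Q}$ in $K$ have rational Dirichlet density $1/2$. Concretely, a rational prime $P$ (not dividing $d_K$) splits in $K$ iff the Kronecker symbol $\bigl(\tfrac{d_K}{P}\bigr)=+1$, which by quadratic reciprocity is a condition on the residue class of $P$ modulo $|d_K|$ selecting exactly half of the reduced residue classes. Dirichlet's theorem on primes in arithmetic progressions then gives the density $1/2$, i.e.\
$$\sum_{P \text{ split}} P^{-s} \;\sim\; \tfrac{1}{2}\,\log\tfrac{1}{s-1} \qquad \text{as } s\to 1^+.$$

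Finally, I would split the numerator and denominator of the density ratio according to the three types of primes. Using $q_\pp=P^{f(\pp)}$, the numerator is
$$\sum_{\pp\in M^{\rm un}} q_\pp^{-s} \;=\; 2\sum_{P \text{ split}} P^{-s},$$
while the denominator is
$$\sum_{\pp\in M_K^\star} q_\pp^{-s} \;=\; 2\sum_{P \text{ split}} P^{-s} \;+\; \sum_{P \text{ inert}} P^{-2s} \;+\; \sum_{P \text{ ramified}} P^{-s}.$$
The last two sums are bounded as $s\to 1^+$ (the inert sum is dominated by $\sum_P P^{-2s}<\infty$, and the ramified sum is finite). Hence both numerator and denominator are asymptotic to $\log\tfrac{1}{s-1}$, so their ratio tends to $1$, yielding $\delta(M^{\rm un})=1$.

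There is no real obstacle here; the only delicate input is the density $1/2$ of split primes, which is a standard consequence of Chebotarev (or equivalently, of Dirichlet together with quadratic reciprocity). Everything else is bookkeeping of $p^{-s}$ versus $p^{-2s}$ terms in the definition of Dirichlet density.
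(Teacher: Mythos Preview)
Your argument is correct. The classification of primes in the quadratic extension, the identification of $M^{\rm un}$ with the primes above split rational primes, and the asymptotic comparison are all sound.

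One minor remark: you do not actually need the precise density $1/2$ of the split primes. Since the denominator $\sum_{\pp\in M_K^\star} q_\pp^{-s}$ diverges as $s\to 1^+$ (from the simple pole of the Dedekind zeta function $\zeta_K$ at $s=1$), and since you have already shown that the denominator differs from the numerator only by the bounded inert and ramified contributions, the ratio tends to $1$ automatically. Invoking Chebotarev or quadratic reciprocity plus Dirichlet is fine but heavier than necessary.

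As for the comparison with the paper: the paper does not give a proof at all. It simply records the lemma as an ``easy'' fact and points to an exercise in Conrad's notes on Dirichlet density. Your write-up therefore supplies exactly the kind of short direct argument the authors chose to omit.
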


If $W$ is a finitely generated subgroup of $K^\times$, of (torsion-free) rank $r\geq 1$, and $\pp$ is a non-archimedean prime, we let $\psi_\pp\colon W\rightarrow k(\pp)^\times$ denote the quotient map to the unit subgroup of the residue field, and let $M(K,W)$ be the set of primes $\mathfrak p$ such that:
 \begin{enumerate}
 \item $v_{\mathfrak p}(w)=0$ for all $w\in W$, and
 \item the index of $\psi_\pp(W)$ in  $k(\pp)^\times$ divides $1$.
 \end{enumerate}     

In \cite[Statement 2.4]{Len77}, Lenstra proves in particular that: 

\begin{theorem}\label{thmlenstra}
The set $M(K,W)$ has upper Dirichlet density strictly less than $1$. 
\end{theorem}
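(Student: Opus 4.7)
The plan is to exhibit, via Chebotarev's density theorem, a set of primes of positive Dirichlet density that is disjoint from $M(K,W)$; this will immediately give $\delta_+(M(K,W)) < 1$. By definition, $\pp \in M(K,W)$ iff every $w \in W$ is a $\pp$-unit and $\psi_\pp(W) = k(\pp)^\times$. To block the surjectivity on a positive-density set of primes, I would force the image $\psi_\pp(W)$ to sit inside the subgroup of $\ell$-th powers of $k(\pp)^\times$, for a suitable auxiliary rational prime $\ell$, while simultaneously ensuring that this subgroup is proper, i.e.\ that $\ell \mid q_\pp - 1$.

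Let $w_1, \ldots, w_n$ be generators of $W$ and, for a rational prime $\ell$ to be chosen, put
\[
L \;=\; K\bigl(\zeta_\ell,\, \sqrt[\ell]{w_1},\, \ldots,\, \sqrt[\ell]{w_n}\bigr),
\]
a finite Galois extension of $K$. I need to choose $\ell$ so that $[L:K] > 1$. Since $W$ has rank at least $1$, it contains an element $w$ of infinite order; because $K$ is imaginary quadratic its units are torsion, so $\Div(w) \neq 0$ and there exists $\pp_0$ with $v_{\pp_0}(w) = m \neq 0$. Then $w \in (K^\times)^\ell$ forces $\ell \mid m$, excluding only finitely many primes $\ell$. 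Picking $\ell$ outside this finite set guarantees $[K(\sqrt[\ell]{w}):K] = \ell$, hence $[L:K] \geq \ell > 1$.

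Now apply Chebotarev: the set $B$ of primes of $K$ that split completely in $L/K$ has Dirichlet density $1/[L:K] > 0$. For $\pp \in B$, excluding the finitely many dividing $\ell$ or any $w_i$, splitting in $K(\zeta_\ell)/K$ yields $\zeta_\ell \in K_\pp$, i.e.\ $\ell \mid q_\pp - 1$, so $(k(\pp)^\times)^\ell$ is a proper subgroup of $k(\pp)^\times$ of index $\ell$; and since each $\sqrt[\ell]{w_i}$ lies in $K_\pp$, Hensel's lemma gives $\psi_\pp(w_i) \in (k(\pp)^\times)^\ell$ for all $i$. Hence $\psi_\pp(W) \subseteq (k(\pp)^\times)^\ell \subsetneq k(\pp)^\times$, so $\pp \notin M(K,W)$, yielding
\[
\delta_+(M(K,W)) \;\leq\; 1 - 1/[L:K] \;<\; 1.
\]
The main delicate point is the choice of $\ell$: one must verify that at least one generator of $W$ is not an $\ell$-th power in $K$, so that $L$ is a proper extension and Chebotarev provides a genuinely nontrivial density. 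Everything else is standard Kummer theory together with the usual translation between the splitting behaviour of $\pp$ in $L/K$ and being locally an $\ell$-th power at $\pp$.
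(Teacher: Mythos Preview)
Your argument is correct. Note, however, that the paper does not actually prove this statement: it is quoted verbatim as a special case of \cite[Statement~2.4]{Len77}, with no argument supplied beyond the citation. What you have written is a clean self-contained proof via the standard Kummer--Chebotarev mechanism, which is in the same spirit as Lenstra's original approach.

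One minor remark: your ``delicate point'' about arranging some $w_i \notin (K^\times)^\ell$ is not strictly needed to ensure $[L:K]>1$. Since $K$ is imaginary quadratic it contains only finitely many roots of unity, so for all but finitely many primes $\ell$ one already has $\zeta_\ell \notin K$ and hence $[L:K] \geq [K(\zeta_\ell):K] > 1$. The rest of your argument---that complete splitting in $L$ forces $\ell \mid q_\pp - 1$ and $\psi_\pp(W) \subseteq (k(\pp)^\times)^\ell$---goes through unchanged and yields $\delta_+(M(K,W)) \leq 1 - 1/[L:K] < 1$.
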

In Lenstra's notation our $M(K,W)$ corresponds to $M(K,K,\{Id\},W,1)$. 
\begin{lemma}\label{lemlenstra} There exist a rational prime  $p\in \mathbb{N}$  and a non-zero  integer $0<b<p$, such that $px+b$  is never a unit in $ \O_{K,S}$ as $x$ varies over $ \O_{K,S}$.
\end{lemma}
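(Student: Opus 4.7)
The plan is to apply Lenstra's theorem (Theorem \ref{thmlenstra}) to the finitely generated group $W := \O_{K,S}^\times$ of $S$-units, and then combine it with Lemma \ref{lemmaconrad} to find a prime $\pp$ at which some residue class is avoided by every $S$-unit. That missed residue class will supply the desired $b$.

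First I would note that by Dirichlet's $S$-unit theorem $W = \O_{K,S}^\times$ is finitely generated. Since $K$ is imaginary quadratic, $\O_K^\times$ is finite, so the torsion-free rank of $W$ equals $|S| - 1 + |S| \geq |S|$; in any case, because $S$ is nonempty the rank $r$ is at least $1$, so Theorem \ref{thmlenstra} applies. Next, Lemma \ref{lemmaconrad} gives that $M^{\mathrm{un}}$ has density $1$, while Theorem \ref{thmlenstra} gives that $M(K,W)$ has upper density strictly less than $1$. Removing the finite set $S$, the set $M^{\mathrm{un}} \setminus (S \cup M(K,W))$ still has positive upper density and in particular is nonempty.

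Pick any prime $\pp$ in this set. Then $\pp$ lies over some rational prime $p$, is unramified with residue degree $1$ (so $k(\pp) \cong \mathbb{F}_p$), and lies outside $S$. Since $\pp \notin S$ every element of $W$ is a $\pp$-adic unit, so condition (1) in the definition of $M(K,W)$ holds automatically; therefore $\pp \notin M(K,W)$ forces the image $\psi_\pp(W)$ to be a proper subgroup of $k(\pp)^\times = \mathbb{F}_p^\times$. Choose a residue $b \in \mathbb{F}_p^\times \setminus \psi_\pp(W)$ and view it as an integer with $0 < b < p$.

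It remains to verify the claim. Suppose, for contradiction, that $px + b = u$ for some $x \in \O_{K,S}$ and some $u \in \O_{K,S}^\times$. Since $\pp$ lies over $p$ and $\pp \notin S$, the element $p$ has positive $\pp$-adic valuation, so $px \in \pp \O_{K,S}$ and hence $u \equiv b \pmod{\pp\O_{K,S}}$. Under the identification $\O_{K,S}/\pp\O_{K,S} \cong \mathbb{F}_p$ this says $\psi_\pp(u) = b$, contradicting the choice of $b \notin \psi_\pp(W)$. I do not foresee a serious obstacle beyond correctly matching the setup of Lenstra's statement: the only thing to be careful about is that condition (1) of $M(K,W)$ is automatic for primes outside $S$, so that failing to lie in $M(K,W)$ really translates into the surjectivity failure of $\psi_\pp$.
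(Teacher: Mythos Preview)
Your proposal is correct and follows essentially the same approach as the paper: both use Lenstra's theorem to find a prime $\pp$ outside $S$ with residue degree $1$ at which $\psi_\pp(\O_{K,S}^\times)$ is a proper subgroup of $k(\pp)^\times$, and then take $b$ in the missed coset. Your write-up is in fact slightly more explicit, since you spell out why condition~(1) in the definition of $M(K,W)$ is automatic for $\pp\notin S$ and you include the final verification that $px+b$ cannot be a unit; one small slip is the rank formula ``$|S|-1+|S|$'' (the correct rank for imaginary quadratic $K$ is $|S|$), but your conclusion that the rank is at least $1$ is unaffected.
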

\proof Let $M:=\{\pp\in M_K^*\setminus S: [k(\pp)^\times: \psi_\pp(\O_{K,S}^\times)]>1\}.$ It follows from Theorem \ref{thmlenstra} that $\delta_-(M)>0$. We claim that $M\cap M^{\rm un}\ne \emptyset$. Indeed, if $M\cap M^{\rm un}= \emptyset,$ then $\delta_-(M\cap M^{\rm un})\geq \delta_-(M)+\delta_-(M^{\rm un})>1$ which is impossible. 
  
Pick $\pp \in M\cap M^{\rm un}$ and let $p\in \mathbb{N}$ be the rational prime generating the ideal $\pp\cap \mathbb{Z}.$  Now choose an integer $0<b<p$ such that ${\rm res}_\pp(b)\in k(\pp)^\times\setminus \psi_\pp(\O_{K,S}^\times)$. This is possible since $k(\pp)^\times=\F_p^\times$ and $[k(\pp)^\times: \psi_\pp(\O_{K,S}^\times)]>1$.  It follows from the construction  that  $p$ and $b$ are as required. 
\endproof
We are now ready to prove the main result of the section.
\begin{lemma}\label{dif}
The relation $\neq$ is positive-existentially definable in the $\ldv$-structure $\O_{K,S}$.
\end{lemma}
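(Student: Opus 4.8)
The plan is to use Lemma \ref{lemlenstra} to produce a single linear polynomial $px+b$ which never represents a unit, and then exploit the divisibility relation to characterize when an element $a$ is nonzero. The guiding idea is the standard one in this line of work (cf. \cite{CM17}, \cite{muv}): an element $a$ is nonzero precisely when it has only finitely many divisors up to units, and one can detect this finiteness with a divisibility condition involving a quantity that is ``eventually not a unit''. More concretely, I would try the formula
\begin{equation}\label{eqn:difcandidate}
a\neq 0 \iff \exists y\ \bigl( a\mid py+b \ \wedge\ a\mid y \bigr),
\end{equation}
or a small variant thereof, where $p,b$ are the constants furnished by Lemma \ref{lemlenstra}.

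The two implications would be handled separately. For the easy direction, suppose $a=0$. Then $a\mid py+b$ forces $py+b=0$, but since $0<b<p$ this is impossible in $\O_{K,S}$ (reduce mod $p$, or note $py+b=0$ would give $b/p \in \O_{K,S}$ with $b/p$ a nonzero rational of negative valuation at primes above $p$ not in $S$ — one has to be slightly careful here, which is exactly the sort of place where dealing with a non-PID ring and with the set $S$ makes things more delicate than over $\mathbb{Z}$). So the right-hand side fails, giving the contrapositive. For the converse, suppose $a\neq 0$. I would choose $y$ to be a suitable multiple of $a$, say $y = ca$ for an appropriate $c\in\O_{K,S}$, so that $a\mid y$ holds trivially; the real content is arranging $a \mid py+b = pca + b$, i.e. $a \mid b$ after absorbing the $pca$ term — wait, that over-simplifies, since we need $a\mid pca+b$ which given $a\mid pca$ reduces to $a \mid b$, and that need not hold. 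So the formula \eqref{eqn:difcandidate} as written is too naive; the correct formula must instead force $a$ to divide something of the shape $py+b$ where $y$ itself ranges over multiples of $a$ scaled so that the combination becomes divisible — the honest approach is to pick $y$ so that $py+b$ is a \emph{unit times} $a$, or more robustly, to write a condition saying ``$a$ divides a product of the form $(px_1+b)(px_2+b)\cdots$'' and use unique factorization of ideals together with Lemma \ref{lemlenstra} to conclude that any such product, having only prime factors at $\pp\in S$ or at primes where $px_i+b$ is forced to be non-unit, is controlled.

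The cleanest route I would actually pursue: show $a\neq 0$ iff there exists $y$ with $a \mid y(py+b)$ and $a \mid (y+1)(p(y+1)+b)$ — the point being that $y(py+b)$ and $(y+1)(p(y+1)+b)$ generate ideals whose gcd is bounded (their difference is a linear-in-$y$ quantity with content dividing a fixed constant coming from $p$ and $b$), so if $a$ divides both then $a$ divides a fixed nonzero constant, hence $a\ne 0$; conversely if $a \ne 0$, then $py+b$ runs through a full residue system mod the (finitely many) prime power divisors of $a$ outside $S$, and using Lemma \ref{lemlenstra} — the fact that $py+b$ is never a unit means its value at a well-chosen $y$ shares a prime factor with $a$ — one can hit a $y$ making both divisibilities hold, after possibly passing to a bounded product to kill the $S$-part and the archimedean obstruction. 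The main obstacle, and the reason the proof is ``technically much more involved'' than over $\mathbb{Z}$, is precisely this last bookkeeping: over $\mathbb{Z}$ one argues directly with integer factorizations and the sign, whereas here one must work with the ideal group of $\O_{K,S}$ (not principal in general), track the finitely many places in $S$ where divisibility behaves differently, and handle the archimedean places — so the quantitative claim ``$px+b$ is never a unit, hence its prime support meets that of any fixed nonzero $a$ for suitable $x$'' requires an approximation/CRT argument in $\O_{K,S}$ rather than an elementary congruence in $\mathbb{Z}$. I would expect the final formula to be a positive-existential combination of a few divisibility clauses of the above type, with the verification of the nontrivial direction being the bulk of the work.
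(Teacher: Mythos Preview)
Your proposal never lands on a working formula, and the direction you are pushing --- making $a$ divide various expressions built from $py+b$ --- is genuinely harder than what the paper does. In your final candidate, the forward direction (if $a\neq 0$ then one can find $y$ with $a\mid y(py+b)$ and $a\mid (y+1)(p(y+1)+b)$) is not justified: you assert ``one can hit a $y$ making both divisibilities hold'' but give no mechanism, and indeed there is no reason a generic nonzero $a\in\O_{K,S}$ should divide any value of $y(py+b)$ at all without further work. The gcd argument you sketch for the backward direction is also not quite right, since $y(py+b)$ and $(y+1)(p(y+1)+b)$ are quadratic in $y$ and their resultant-type content is not obviously a fixed nonzero constant.

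The missing idea is to flip the roles: instead of asking $a$ to divide something, ask for a \emph{B\'ezout relation}. The paper's formula is
\[
\psi_{\neq}(a)\colon\ \exists A,B,x\ \bigl(\, a\mid A\ \wedge\ px+b\mid B\ \wedge\ A+B=1\,\bigr),
\]
i.e.\ ``there exist $r,s,x$ with $ra + s(px+b)=1$''. If $a=0$ this forces $px+b$ to be a unit, contradicting Lemma~\ref{lemlenstra}. If $a\neq 0$, reduce to $a\in\O_K$, factor $a\O_K=\pp_1^{k_1}\cdots\pp_m^{k_m}$, and use Dirichlet's theorem on primes in arithmetic progressions to pick $x\in\mathbb{Z}$ with $px+b$ a rational prime coprime to each $P_i=\pp_i\cap\mathbb{Z}$; then $a\O_K$ and $(px+b)\O_K$ are comaximal and the B\'ezout witnesses exist. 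This bypasses all of the CRT/approximation bookkeeping you anticipated: the archimedean place and the non-PID issues simply do not enter, because comaximality of ideals in $\O_K$ is enough.
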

\proof
Let $p$ and $b$ be given as in  Lemma \ref{lemlenstra}. The formula
$$
\psi_{\neq}(y):\exists A,B,x(y\mid A\wedge px+b\mid  B\wedge A+B=1)
$$
positive-existentially defines the relation ``$y\neq 0$'' in $\O_{K,S}$.

First note that the formula $\psi_{\neq}(y)$ translates to  ``There exist $r,s,x\in \O_{K,S}$ such that $ry+s(px+b)=1$'' in $ \O_{K,S}$.   

If  $y=0$, then the formula is false, since by Lemma \ref{lemlenstra}, $px+b$ is never a unit in $\O_{K,S}$. 

For the other implication suppose $y\neq 0$. We may assume that $y\in \O_K$ (otherwise, multiply the $r$ that we will find by a denominator of $y$).  Let $\pp_1^{k_1}\cdots \pp_m^{k_m}$ be the prime factorization  of the ideal $y\O_{K}$, and let $P_1,\dots, P_m$ be the primes below $\pp_1,\dots, \pp_m,$ respectively.   Since $p$ and $b$ are relatively prime, by Dirichlet's  theorem there exists $x$ such that $qx+b$ is prime, and coprime with  $P_1,\dots, P_m$. It follows that the ideals $y\O_K$ and $(qx+b)\O_K$ are comaximal. Hence, there are $r,s \in \O_K$ so that $ry+s(qx+b)=1$ as required.     \endproof

\section{Defining the multiplication function restricted to units}
In this section we show that the multiplication function restricted to units is positive-existentially definable in $\O_{K,S}$. Before proceeding any further, we need to introduce some notation. 
\begin{itemize}
\item $p_1,\dots, p_k$ are the prime elements in $\mathbb{Z}$ below the primes of $S$.
\item $s=\# S$. 
\item $u_K =\#\O_{K,S}^\times$.
\item for $x,y\in \O_{K,S},$ we abbreviate the formula $x\dib y \wedge y\dib x$ by $as(x,y).$
\item $I=\{0,\dots, 4\}^k$.
\item If $\alpha=(\alpha_1,\dots,\alpha_k)\in \mathbb N^k$ is a multi-index, we write $p^\alpha=\prod_{i=1}^kp_i^{\alpha_i}$. 
\item For $i=1,\dots, u_K+1$, let  $q_{i}\in\mathbb{Z}$ be pairwise different rational primes not below any prime of $S$.
\end{itemize}

\begin{lemma}\label{fakepigeonhole}
For every $x,y \in \O^\times_{K,S}$, there exist a multi-index $\beta\in I$  such that $v_\pp(p^\beta x)\ne 0$ and $v_\pp(p^\beta z)\ne v_\pp(y)$ for every prime $\pp$ in $S$.
\end{lemma}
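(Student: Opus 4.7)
The plan is a coordinate-by-coordinate pigeonhole argument, using that the set $I=\{0,\ldots,4\}^k$ has $5$ choices in each coordinate while only a bounded number are forbidden. I am reading the statement with $z$ as a typo for $x$, so the conditions are $v_\pp(p^\beta x)\neq 0$ and $v_\pp(p^\beta x)\neq v_\pp(y)$ for every $\pp\in S$.

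First I set up the book-keeping. Fix an index $i\in\{1,\ldots,k\}$. Since $K/\mathbb{Q}$ is a quadratic imaginary extension, the rational prime $p_i$ has at most two primes of $K$ above it, and for every $\pp\in S$ with $\pp\mid p_i$ the ramification index $e_\pp$ lies in $\{1,2\}$. For any multi-index $\beta=(\beta_1,\ldots,\beta_k)$ and any $\pp\in S$ lying over $p_i$, one has $v_\pp(p^\beta)=e_\pp\beta_i$, since $v_\pp(p_j)=0$ for $j\neq i$. Consequently the conditions imposed on $\beta$ by the primes above $p_i$ depend only on the single coordinate $\beta_i$, so the problem decouples over $i$.

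Next I count forbidden values. For a fixed $\pp\in S$ above $p_i$, the requirement $v_\pp(p^\beta x)\neq 0$ is exactly $e_\pp\beta_i\neq -v_\pp(x)$, which rules out at most one element of $\{0,\ldots,4\}$; similarly $v_\pp(p^\beta x)\neq v_\pp(y)$ rules out at most one more. Since at most two primes of $S$ lie above $p_i$, the total number of forbidden values for $\beta_i$ is at most $2\cdot 2=4$. As $|\{0,\ldots,4\}|=5$, there remains at least one admissible choice of $\beta_i$.

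Choosing such a $\beta_i$ independently for each $i=1,\ldots,k$ produces the required $\beta\in I$. The main ``obstacle'' is only the accounting: one must verify that the ramification/residue possibilities in a quadratic imaginary extension indeed give at most two primes in $S$ over each $p_i$ and at most two forbidden values per prime, which together yield the crucial inequality $4<5$ that justifies why the interval $\{0,\ldots,4\}$ (and not a shorter one) appears in the definition of $I$.
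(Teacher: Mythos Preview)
Your proof is correct and follows essentially the same coordinate-by-coordinate pigeonhole argument as the paper: for each $i$ one excludes the values of $\beta_i$ forced by the two conditions at each of the (at most two) primes above $p_i$, leaving at least one of the five values in $\{0,\dots,4\}$ available. The only discrepancy is your reading of the typo: the paper actually intends $z$ to be a third unit (see how the lemma is applied in the proof of the product-of-units proposition), not a misprint for $x$; but this makes no difference to the argument, since either reading yields at most two forbidden values per prime and hence at most four per coordinate.
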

\proof
Given $x,y \in \O^\times_{K,S}$, for each  $1\leq i \leq k$, choose 
$$
\beta_i\in\{0,1,\dots,4\}\setminus \left(\left\{-\frac{v_\pp(x)}{e(\pp:p_i)}\colon \pp \dib p_i\right\}\cup\left\{\frac{v_\pp(z)-v_\pp(y)}{e(\pp:p_i)}\colon \pp  \dib   p_i\right\}\right),
$$
arbitrarily. Observe that this is possible since there are at most two primes above any of the $p_i$'s. It is straightforward to verify that $\beta:=(\beta_1,\dots,\beta_k)$ is as required.
\endproof

\begin{proposition}\label{produnits}
The set $\prd_u=\{(x,y,z)\colon x,y,z \in \O^\times_{K,S},\ z=xy\}$ is positive-existentially definable in $\O_{K,S}$.
\end{proposition}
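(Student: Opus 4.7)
The plan is to encode the equation $z = xy$ among units through the algebraic identity
$$
(q + a)(q + b) = q^2 + q(a + b) + ab,
$$
specialized at $a = p^\beta x$ and $b = y$. When $z = xy$, the right-hand element $q^2 + q(p^\beta x + y) + p^\beta z$ factors as $(q + p^\beta x)(q + y)$ and is therefore divisible by $q + p^\beta x$. The role of Lemma~\ref{fakepigeonhole} is to choose $\beta \in I$ so that $p^\beta x$ carries non-trivial valuations at every prime of $S$, which prevents the resulting divisibility statement from collapsing to a tautology on the unit group of $\O_{K,S}$.

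Concretely, I would take the defining formula
\begin{align*}
\prd_u(x, y, z) \equiv\ & x \dib 1 \wedge y \dib 1 \wedge z \dib 1 \\
&\wedge \bigvee_{\beta \in I} \bigwedge_{i = 1}^{u_K + 1} \bigl( q_i + p^\beta x \ \dib\ q_i^2 + q_i(p^\beta x + y) + p^\beta z \bigr).
\end{align*}
The first three conjuncts say that $x, y, z$ are units (since $a \dib 1$ characterises $\O_{K,S}^\times$); the finite disjunction over $\beta \in I$ is positive-existentially expressible (via a disjunction of conjunctions with the associated existential witnesses); and the terms $p^\beta x$ are unfolded as iterated additions.

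Forward direction: given units $x, y, z$ with $z = xy$, apply Lemma~\ref{fakepigeonhole} to get a suitable $\beta \in I$. Each of the $u_K + 1$ divisibility clauses is then immediate from the identity $(q_i + p^\beta x)(q_i + y) = q_i^2 + q_i(p^\beta x + y) + p^\beta z$.

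Backward direction (the main obstacle): assume the formula holds. Each divisibility rewrites as $(q_i + p^\beta x) \mid p^\beta(z - xy)$, and we must force $z - xy = 0$. My plan is a valuation-by-valuation analysis. At primes $\pp \in S$, the conditions $v_\pp(p^\beta x) \ne 0$ and $v_\pp(p^\beta z) \ne v_\pp(y)$ supplied by Lemma~\ref{fakepigeonhole} pin down the $\pp$-adic behaviour of $q_i + p^\beta x$ and of the candidate discrepancy. At primes above each $q_i$ (which lie outside $S$ by the choice of the $q_i$), $q_i + p^\beta x$ has non-trivial valuation, which the divisibility forces $p^\beta(z - xy)$ to absorb; the different $q_i$ being rational primes gives coprimality of the $q_i + p^\beta x$ outside $S$. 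Once the valuation conditions match, $z/(xy)$ is a root of unity in $K$, and the reason we ask for $u_K + 1$ of the divisibilities is precisely that $u_K = |\mu_K|$: the extra condition rules out any non-trivial root-of-unity discrepancy, leaving $z/(xy) = 1$.

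The technical heart is this backward direction. The delicate step is to assemble the local valuation data in $\O_{K,S}$ (a Dedekind domain that is in general not a PID, with possibly non-trivial class group and two primes per rational prime), and to exploit the pigeonhole provided by $u_K + 1 > |\mu_K|$ to eliminate every possible non-trivial root-of-unity discrepancy $z/(xy) \in \mu_K \setminus \{1\}$.
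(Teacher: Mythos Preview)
Your formula does not define $\prd_u$. Here is a concrete failure. Take $K=\Q(i)$, $S=\{(1+i)\}$, so $p_1=2$ and $\O_{K,S}=\mathbb{Z}[i,1/2]$. Let $x=y=1$ and $z=i$. Then $z-xy=i-1$, and since $(1+i)(1-i)=2$ the element $i-1$ is a unit of $\O_{K,S}$. Hence $p^\beta(z-xy)=2^\beta(i-1)$ is a unit for every $\beta$, so every divisibility $q_i+p^\beta x\mid p^\beta(z-xy)$ is vacuous, and your formula holds while $z\ne xy$.

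Two structural choices cause this. First, you use a disjunction $\bigvee_{\beta\in I}$ in the formula but then appeal to Lemma~\ref{fakepigeonhole} in the backward direction. That lemma is existential: it guarantees \emph{some} good $\beta$, whereas from a satisfied disjunct you only get whichever $\beta$ the witness hands you, with no control over its valuations. The paper instead quantifies by a conjunction over all $\alpha\in I$, so in the backward direction one is free to \emph{select} the $\beta$ provided by Lemma~\ref{fakepigeonhole}. Second, and more seriously, you use one-sided divisibility. Your clause rewrites as $(q_i+p^\beta x)\mid p^\beta(z-xy)$; when the right side is a unit (as above) this carries no information at all, and your claim that $q_i+p^\beta x$ has nontrivial valuation at primes above $q_i$ is false (there $v(q_i)>0$ but $v(p^\beta x)=0$, so the sum has valuation $0$). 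The paper avoids this by using the \emph{associate} relation $\ass(p^\alpha x+q_i,\,p^\alpha z+q_i y)$, which produces an actual equation $p^\beta x+q_i=u_i(p^\beta z+q_i y)$ with an explicit unit $u_i$. The valuation conditions from Lemma~\ref{fakepigeonhole} then force $v_\pp(u_i)$ to be independent of $i$ for every $\pp\in S$, so the $u_i$ differ only by elements of $\O_K^\times$; with $u_K+1$ indices, pigeonhole yields $u_i=u_j$ for some $i\ne j$, and subtracting the two equations gives $1=u_i y$ and then $z=xy$. Your sketch invokes $u_K+1$ and pigeonhole, but there is nothing in the one-sided divisibility setup to pigeonhole on.
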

\proof
We shall prove that the following  formula works.
$$
\prd_u(x,y,z)\colon x\mid 1\wedge y\mid 1\wedge z\mid 1 \wedge  \bigwedge_{\substack{\alpha\in I\\i=1,\dots,u_K+1 }} \ass(p^{\alpha} x+q_i,p^{\alpha}z+q_iy)
$$
On one hand, if $z=xy$ for some $x,y,z\in\O^\times_{K,S}$, then $\prd_u(x,y,z)$ holds trivially in $\O_{K,S}$. 

On the other hand, suppose  that $\prd_u(x,y,z)$ holds in $\O_{K,S}$. Choose $\beta\in I$ a multi-index satisfying the conclusions Lemma \ref{fakepigeonhole}. Let $u_{i}$ be units such that
\begin{equation}\label{equprin3-1}
p^{\beta}x+q_i=u_{i}(p^{\beta}z+q_iy).
\end{equation}
Notice that by our choice of $q_i$'s and $\beta$, we have that $v_\pp(p^{\beta}x)\ne v_\pp(q_i)$ and $v_\pp(p^{\beta}z)\ne v_\pp(q_iy)$ for all $\pp\in S$ and all $i\in \{1,\dots,u_K+1\}$. Thus, 
$$
\begin{aligned}
v_\pp(u_{i})&=v_\pp(u_{i}(p^{\beta}x+q_i))-v_\pp(p^{\beta}z+q_iy)\\
&=\min \{v_\pp(p^{\beta}x),v_\pp(q_i)\}-\min \{v_\pp(p^{\beta}z),v_\pp(q_iy)\},
\end{aligned}
$$
and the latter expression does not depend on $i$, because by definition of $q_i$ we have $v_\pp(q_i)=0$ for all $\pp\in S$ and all $i\in \{1,\dots,u_K+1\}$. In particular, we have $v_\pp (u_{i})=v_\pp (u_{j})$ for every prime $\pp$ in $S$ and every $i$ and $j$. This implies that $u_i$ and $u_j$ have the same zeros and poles, hence they differ by a unit in $\O_K$. Since there are $u_K$  units in $\O_K$, using the pigeonhole principle we obtain that there exists $i\ne j$ so that $u_i=u_{j}=u$. 

From Equation \eqref{equprin3-1} for $i$ and $j$, after subtracting the sides of the equations, we obtain $1=uy$.  Substituting in the first equation we get $z=xy$. This concludes the proof of the Proposition. 
\endproof
\section{Abundance of large units }
In this section we introduce a notion of relative largeness between non-zero elements and units of $\O_{K,S}$. Roughly speaking, we want to express the fact that  for each non-zero element there is a unit of greater "height" using divisibility conditions.


\begin{lemma}\label{lemunitinforder}
For every $x\in O^*_{K,S}$ there exists a unit $\varepsilon$ of infinite order such that $x \dib \varepsilon-1$.  
\end{lemma}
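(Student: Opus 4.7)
The plan is to combine Dirichlet's $S$-unit theorem with a simple pigeonhole argument on the finite residue ring $\O_{K,S}/x\O_{K,S}$.

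First I would produce at least one unit $u \in \O_{K,S}^\times$ of infinite order. Since $K$ is imaginary quadratic (so the number of archimedean places is $1$) and $S$ is a nonempty set of non-archimedean places, Dirichlet's $S$-unit theorem gives that $\O_{K,S}^\times$ is a finitely generated abelian group of rank $|S|\geq 1$. In particular, a generator of the free part provides an $u \in \O_{K,S}^\times$ of infinite order.

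Second, I would observe that for any $x \in \O^\ast_{K,S}$ the quotient ring $\O_{K,S}/x\O_{K,S}$ is finite. Indeed, $\O_{K,S}$ is a Dedekind domain, so $x\O_{K,S}$ factors as $\prod_{\pp\notin S}\pp^{e_\pp}$ (primes in $S$ contribute nothing since they are inverted), and by the Chinese Remainder Theorem
\[
\O_{K,S}/x\O_{K,S} \;\cong\; \prod_{\pp\notin S}\O_{K,S}/\pp^{e_\pp} \;\cong\; \prod_{\pp\notin S}\O_K/\pp^{e_\pp},
\]
which is a finite product of finite rings. In particular, the unit group $(\O_{K,S}/x\O_{K,S})^\times$ is a finite group.

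Third, I would push $u$ through the reduction map $\O_{K,S}^\times \to (\O_{K,S}/x\O_{K,S})^\times$ (well defined since units reduce to units). Because the target is finite, the image of $u$ has finite order, say $n\geq 1$. Setting $\varepsilon := u^n$ we get $\varepsilon \equiv 1 \pmod{x\O_{K,S}}$, that is, $x \dib \varepsilon - 1$ in $\O_{K,S}$. Since $u$ has infinite order in the torsion-free-rank part of $\O_{K,S}^\times$, so does $\varepsilon = u^n$, and we are done.

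I do not anticipate a real obstacle here: the content of the lemma is Dirichlet's theorem plus the finiteness of the residue ring, both of which are standard for $S$-integers in a global field. The only point that needs a sentence of care is why $\O_{K,S}/x\O_{K,S}$ is finite when $x$ is allowed to have poles at primes of $S$, which is handled by the factorization above.
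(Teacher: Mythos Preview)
Your proof is correct and follows essentially the same strategy as the paper: produce a unit of infinite order and then use finiteness of a residue ring plus pigeonhole to find a power congruent to $1$ modulo $x$. The only cosmetic differences are that the paper constructs the unit explicitly as a generator of $\pp^{h_K}$ for some $\pp\in S$ and passes to an auxiliary element $y\in\O_K$ to work in $\O_K/(y)$, whereas you invoke the $S$-unit theorem and work directly in $\O_{K,S}/x\O_{K,S}$; your version is arguably cleaner.
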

\proof
Let $x\in O^*_{K,S}$ be given. Let $\pp_1^{\alpha_1}\cdots \pp_\ell^{\alpha_\ell}$ be the factorization of the fractional ideal $x^{h_K}\O_K$ where $h_K$ denotes the ideal class number. For each, $1\leq i\leq \ell$, choose $x_i$ such that $x_i\O_K=\pp_i^{|\alpha_i|}$, this is possible since $h_K| \alpha_i.$ Set $y=\prod_{\pp_i\notin S} x_i$. Choose $\pp\in S$ (this is where we need that  $S$ is nonempty) and let $\varepsilon_0$ be a generator of $\pp^{h_K}$. Since $\O_K/(y)$ is finite, there are $m>n>0$ such that $\varepsilon^m_0\equiv \varepsilon_0^n \mod (y)$. Using that $\varepsilon_0$ is not a zero-divisor in the quotient we infer that $y|\varepsilon_0^{m-n}$. It follows that $\varepsilon:=\epsilon_0^{m-n}$ is as required.  
\endproof

\begin{rmk}
	Note that since the torsion part of  $\O^\times_{K,S}$ is finite, there is a positive existential $\mathcal{L}_{\Div}$-formula $\varphi_\infty(x)$ so that $\varphi_\infty(u)$ holds in $\O_{K,S}$ if and only if $u$ is a unit of infinite order.
\end{rmk}

Recall that, for each nonzero prime $\pp$ of $\O_K$, $k(\pp):=\O_K/\pp$ denotes the residue field at $\pp$ and $q_\pp:=\#k(\pp)$. We also recall the product formula $|a|^2=\prod_{\pp\in M_K^*}q_\pp^{v_\pp(a)}$ for each $a\in K$.

\begin{lemma}\label{keylemma}
There is a constant $0<C<1 $ such that given  any $x\in \mathcal O_{K,S}^*$, there exists $a\in  O_{K,S}^*$  and $b\in \O^*_{K}$ such that $x=\frac{a}{b}$, $v_\pp(a)> -h_K$ for every $\pp\in S$, and $C<|a|$.  Moreover, for every $\pp\in S$, if $v_\pp(a)\geq 0,$ then $v_\pp(b)=0$.
\end{lemma}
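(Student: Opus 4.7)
The plan is to construct $b\in\O_K^*$ as an explicit product of elements of $\O_K$ each supported at a single prime of $S$ together with one auxiliary prime outside $S$, so that $a=xb$ has its valuations at primes of $S$ pushed into a narrow interval and the size estimate follows from the product formula. The key algebraic inputs are the finiteness of the class group of $K$ and the existence of a prime in every ideal class outside the finite set $S$ (e.g. by Chebotarev).

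For each $\pp\in S$ I first produce $\pi_\pp\in\O_K$ with $v_\pp(\pi_\pp)=1$ and $v_\ell(\pi_\pp)=0$ for every $\ell\in S\setminus\{\pp\}$: if $\pp$ is principal, take any generator; otherwise, choose a prime $\qq_\pp\notin S$ in the inverse class $[\pp]^{-1}$, so that $\pp\qq_\pp$ is principal, and let $\pi_\pp$ be one of its generators. Next, for each $\pp\in S$, set $n_\pp:=\max\{0,\,-1-v_\pp(x)\}$, $b:=\prod_{\pp\in S}\pi_\pp^{n_\pp}\in\O_K^*$, and $a:=xb$. A direct valuation check at each $\pp\in S$ splits into two cases: if $v_\pp(x)\geq 0$ then $n_\pp=0$, so $v_\pp(b)=0$ and $v_\pp(a)=v_\pp(x)\geq 0$, fulfilling the ``moreover'' clause; if $v_\pp(x)<0$ then by construction $v_\pp(a)=-1>-h_K$ (assuming $h_K\geq 2$) and the ``moreover'' clause is vacuous. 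Since each $\pi_\pp$ has non-negative valuation outside $S$, for any $\qq\notin S$ one gets $v_\qq(a)=v_\qq(x)+v_\qq(b)\geq 0$, so $a\in\O_{K,S}^*$.

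The size bound then falls out of the product formula $|a|^2=\prod_\pp q_\pp^{v_\pp(a)}$: combining $v_\pp(a)\geq 0$ for $\pp\notin S$ with $v_\pp(a)\geq -h_K+1$ for $\pp\in S$ yields $|a|^2\geq \prod_{\pp\in S}q_\pp^{-h_K+1}$, a positive constant depending only on $K$ and $S$, so any $C$ strictly smaller than this universal value works. I expect the principal obstacle to be step one, namely producing for each non-principal $\pp\in S$ an auxiliary prime $\qq_\pp\notin S$ in the inverse class and thereby an element of $\O_K$ with $\pp$-valuation exactly $1$; this is what forces one beyond the naive use of generators of $\pp^{h_K}$, which would collide with the ``moreover'' clause whenever $h_K$ divides $v_\pp(x)$. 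The corner case $h_K=1$ (a PID, where the strict inequality becomes degenerate) requires either a brief separate PID argument or the non-strict reading $v_\pp(a)\geq -h_K$.
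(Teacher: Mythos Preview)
Your argument is correct (for $h_K\ge 2$, as you note) but it is not the route the paper takes. The paper clears denominators \emph{inside} $S$: for each $\pp$ with $v_\pp(x)<0$ it picks the largest $\beta_\pp\le -v_\pp(x)$ with $\pp^{\beta_\pp}$ principal, lets $b_\pp$ generate $\pp^{\beta_\pp}$, and sets $b=\prod_{\pp\in D(x)}b_\pp$; no primes outside $S$ ever enter $b$, and no density/Chebotarev input is needed. Your construction instead manufactures, for each $\pp\in S$, an element $\pi_\pp$ with $v_\pp(\pi_\pp)=1$ at the cost of an auxiliary prime $\qq_\pp\notin S$, and then raises it to the exact power needed. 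What you gain is a sharper and cleaner outcome: you force $v_\pp(a)=-1$ whenever $v_\pp(x)<0$, so the ``moreover'' clause is vacuously satisfied there, and your constant $C$ can be taken independent of $h_K$. The paper's construction can land at $v_\pp(a)=0$ with $v_\pp(b)>0$ (namely when the order of $[\pp]$ divides $-v_\pp(x)$), which sits uneasily with the ``moreover'' clause; your approach avoids this edge case entirely. Your flag on $h_K=1$ is well taken: with the strict inequality $v_\pp(a)>-h_K$ the statement is actually incompatible with the ``moreover'' clause as soon as some $v_\pp(x)<0$, so either a non-strict reading or a separate PID argument is the right fix, exactly as you say.
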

\begin{proof}
   Let  $C^2=\prod\limits_{\pp\in S} q_\pp^{-h_K}$ where $h_K$ denotes the ideal class group. Let $D(x):=\{\pp\in S: v_\pp(x)<0\} $. If $D(x)=\emptyset$, then set $b=1$ and $a=x$. Next, suppose $D(x)\ne \emptyset$. For each $\pp\in D(x)$, let $\beta_\pp$ be the maximal integer $\leq -v_\pp(x)$ such that $\pp^{\beta_\pp}$ is a principal ideal. Choose $b_\pp$ such that $b_\pp \O_K=\pp^{\beta_\pp}$. Finally, let $b=\prod\limits_{\pp \in D(x)} b_\pp $ and let $a:=bx$. Notice that by construction $v_\pp(a)>-h_k$ and   for every $\pp\in S$, if $v_\pp(a)\geq 0,$ then $v_\pp(b)=0$. We claim that $|a|> C.$
   Indeed, by the product formula

   $$
   |a|^2=\prod_{\pp\in M_K^*}q_\pp^{v_\pp(a)}=\prod_{\pp\in M_K^*\setminus S}q_\pp^{v_\pp(a)}\prod_{\pp\in S}q_\pp^{v_\pp(a)}
   $$
   $$
   > \prod_{\pp\in S}q_\pp^{-h_K}=C^2.
   $$
   Thus, $|a|> C$ as required.
\end{proof}


The following Lemma makes precise the idea that the unit $\varepsilon$ in Lemma \ref{lemunitinforder} is much larger than $x$. Let $q$ be a rational prime  bigger than $4C^{-2}$ and not below any of the primes in $S$.

\begin{lemma}\label{lemC2}
Let $x\in \O^*_{K,S} $ be such that $v_\pp(x)\ne 0$ for any $\pp\in S$, and let $\varepsilon\in \O_{K,S}^\times$ be a unit of infinite order such that $x\pm 1 | \varepsilon-1$, and $q\dib \varepsilon-1$. If we write $x=\frac{a}{b}$ and $\varepsilon=\frac{u}{v}$, where $a,b$ and $u,v$ are as in Lemma \ref{keylemma}, then  
$$
\max\{\frac{2}{C},|a|,|b|\}\leq \max\{|u|^2,|v|^2\}
$$
\end{lemma}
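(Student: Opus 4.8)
The plan is to compare the absolute values on both sides by exploiting the divisibility conditions $x\pm 1\mid \varepsilon-1$ and $q\mid\varepsilon-1$, together with the product formula and the normalizations from Lemma \ref{keylemma}. First I would record the basic consequences of Lemma \ref{keylemma}: since $\varepsilon=u/v$ with $v\in\O_K^*$ and $v_\pp(u)>-h_K$ for $\pp\in S$, and since $\varepsilon$ is a unit, the element $\varepsilon-1=(u-v)/v$ still has $v$ in the denominator, and $u-v\in\O_{K,S}^*$ with controlled negative valuations at $S$. Likewise $x\pm 1=(a\pm b)/b$. The divisibility $x-1\mid\varepsilon-1$ in $\O_{K,S}$ then says that $\frac{u-v}{v}\big/\frac{a-b}{b}=\frac{b(u-v)}{v(a-b)}$ lies in $\O_{K,S}$, i.e. its valuation is $\ge 0$ at every $\pp\notin S$; similarly for $x+1$ and for $q\mid\varepsilon-1$.

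Next I would run the product-formula estimate. The key point is that $q\mid \varepsilon-1$ forces $|\varepsilon-1|^2=\prod_\pp q_\pp^{v_\pp(\varepsilon-1)}$ to be large away from $S$: writing $|\varepsilon-1|^2=\prod_{\pp\notin S}q_\pp^{v_\pp(\varepsilon-1)}\cdot\prod_{\pp\in S}q_\pp^{v_\pp(\varepsilon-1)}$, the divisibility $q\mid\varepsilon-1$ gives a factor of at least $q$ (or $q^2$, depending on splitting of $q$ — here is where $q>4C^{-2}$ enters) from the places above $q$, while the bound $v_\pp(u)>-h_K$ controls the $S$-part of $|\varepsilon-1|$ from below by a multiple of $C$. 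Combining, $\max\{|u|^2,|v|^2\}\geq |\varepsilon-1|\cdot(\text{something})\geq$ a large multiple of $C^{-1}$, which handles the $\tfrac2C$ term in the max on the left. Then, to dominate $|a|$ and $|b|$: from $a-b\mid u-v$ (up to the denominators $b,v$, all of which are $\O_K$-units or $S$-units with controlled valuations) and the analogous relation from $x+1$, one gets that $(a-b)(a+b)=a^2-b^2$ divides (again up to controlled factors) a power related to $u-v$, hence $|a^2-b^2|\leq|u-v|^2$ roughly; since $|u-v|\leq 2\max\{|u|,|v|\}$ by the triangle inequality and $\max\{|a|,|b|\}^2\leq$ const$\cdot|a^2-b^2|\cdot(\text{lower-order})$ using $C<|a|$ from Lemma \ref{keylemma} to bound things away from $0$, one arrives at $\max\{|a|,|b|\}\leq\max\{|u|^2,|v|^2\}$.

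I would carry out the steps in this order: (1) translate all three divisibility hypotheses into valuation inequalities at every $\pp\notin S$ for the quotients $\frac{b(u-v)}{v(a\pm b)}$ and $\frac{u-v}{qv}$; (2) apply the product formula to $|a\pm b|^2$, $|u-v|^2$, and $|v|^2$, splitting into the $S$-part and the non-$S$-part; (3) use the $q\mid\varepsilon-1$ condition and $q>4C^{-2}$ to get the factor beating $2/C$; (4) use $v_\pp(a)>-h_K$, $v_\pp(u)>-h_K$ to bound the $S$-parts, and $C<|a|$, $|a|,|b|$ being $\O_K$-norms (hence $\geq 1$ when the elements are in $\O_K$, which $b$ is) to keep everything bounded below; (5) assemble the inequality via the triangle inequality $|u-v|\leq 2\max\{|u|,|v|\}$ and $|a-b|,|a+b|\geq$ controlled lower bounds.

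The main obstacle will be step (4)–(5): keeping careful track of the bounded "fudge factors" coming from the denominators $b$ and $v$ and from the fact that $a,u$ need not be in $\O_K$ (only have $v_\pp>-h_K$ at $S$), so that the clean divisibilities $a-b\mid u-v$ etc. only hold up to a bounded $S$-supported factor — and then checking that the single constant $q>4C^{-2}$ is genuinely enough to absorb all of these and still dominate $2/C$, $|a|$, and $|b|$ simultaneously. The archimedean absolute value $|\cdot|$ being the square root of the norm (so $|a|^2=\prod q_\pp^{v_\pp(a)}$) means one must be consistent about squares throughout; I expect the inequality to come out with room to spare precisely because $q$ contributes $q\geq 4C^{-2}$ to $|\varepsilon-1|^2$, i.e. a factor $2C^{-1}$ to $|\varepsilon-1|$, exactly matching the shape of the claimed bound.
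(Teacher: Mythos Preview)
Your overall strategy matches the paper's: translate the divisibilities $q\mid\varepsilon-1$ and $x\pm 1\mid\varepsilon-1$ into statements about $u-v$ versus $a\pm b$, control the $S$-parts via the bounds $v_\pp(\cdot)>-h_K$ from Lemma~\ref{keylemma}, apply the product formula, and finish with the archimedean triangle inequality. One simplification you are missing: by the construction in Lemma~\ref{keylemma}, the elements $b$ and $v$ are supported entirely at primes in $S$, so they are units of $\O_{K,S}$. Hence $x\pm 1\mid\varepsilon-1$ is literally $a\pm b\mid u-v$ in $\O_{K,S}$, and $q\mid\varepsilon-1$ is $q\mid u-v$; no denominator ``fudge factors'' from $b,v$ are needed at all.

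There is, however, a genuine gap in your step (5). You propose to multiply the two divisibilities to get $(a-b)(a+b)=a^2-b^2$ dividing something of size $|u-v|^2$, and then claim $\max\{|a|,|b|\}^2\le \text{const}\cdot|a^2-b^2|$. This last inequality is false in general: take $a=N$, $b=N-1$, so $|a^2-b^2|=2N-1$ while $\max\{|a|,|b|\}^2=N^2$. Multiplying the two divisibilities discards precisely the information you need, and the lower bound $C<|a|$ from Lemma~\ref{keylemma} does not rescue this.

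The paper keeps the two estimates separate. From $a\pm b\mid u-v$ one shows, via the product formula, that $|a\pm b|<C^{-1}|u-v|$; the point is that $c^\pm:=(u-v)/(a\pm b)$ satisfies $v_\pp(c^\pm)>-h_K$ for every $\pp\in S$, and here the hypothesis $v_\pp(x)\ne 0$ is used in a short case analysis to check $v_\pp(a\pm b)\le 0$. Then the triangle inequality applied to $(a+b)+(a-b)=2a$ and $(a+b)-(a-b)=2b$ gives
\[
|a|,\,|b|\;<\;C^{-1}|u-v|\;\le\;2C^{-1}\max\{|u|,|v|\}.
\]
Combined with $\max\{|u|,|v|\}>2C^{-1}$ from the $q$-divisibility (which you handle correctly), this yields $\max\{2C^{-1},|a|,|b|\}\le\max\{|u|^2,|v|^2\}$. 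So your plan is salvageable once you replace the $|a^2-b^2|$ manoeuvre by this additive use of the triangle inequality, and make explicit where $v_\pp(x)\ne 0$ enters.
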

\proof Write $x=\frac{a}{b}$, and $\varepsilon=\frac{u}{v}$, where $a,b$ and $u,v$ are as in the  Lemma \ref{keylemma}.\\
 First of all, we shall show that $q\dib \varepsilon-1$, implies that $\frac{1}{C}\leq \max\{|u|^2,|v|^2\}$. Notice that, since $v$ is a unit 
$q|\varepsilon-1$ if and only if $q|u-v$. Let $r:=\frac{u-v}{q}$. For each $\pp\in S$, $v_\pp(r)=v_\pp(u-v)-v_\pp(q)=v_\pp(u-v)> -h_K.$ Thus, by the product formula, $|r|> C.$ It follows, from our assumptions on $q$, and the triangle inequality that $$2\max\{|u|,|v|\}\geq |u-v|=|q||r|> qC>\frac{4}{C}.$$

Next, we show that $x\pm 1 | \varepsilon-1$ implies $C|a\pm b|\leq |u-v|$.
We will focus on the $+$ case as the other one is analogous. Proceeding as before, we have that $a+b|u-v$. Let $c:=\frac{u-v}{a+b}$. We claim that $v_\pp(c)>-h_K$ for $\pp\in S$. Indeed, first observe that  $v_\pp(u-v)\geq \min\{v_\pp(u),v_\pp(v)\}>-h_K.$ Next, we need an upper bound for $v_\pp(a+b)$ for $\pp\in S$. In order to do this, we proceed by cases. On one hand, if $v_\pp(x)>0,$ then $v_\pp(b)=0$ and thus $v_\pp(a+b)=0$. On the other hand, if $v_\pp(x)<0$, then either $v_\pp(a)=0$ and $v_\pp(b)>0$ or $v_\pp(a)<0 $ and $v_\pp(b)\geq 0$. Hence, in either case $v_\pp(a+b)\leq 0$. It follows that $v_\pp(c)=v_\pp(u-v)-v_\pp(a+b)>-h_K$. Using the product formula again we have that 

$$|u-v|^2= \prod_{\pp\in M_K^*}q_\pp^{v_\pp(u-v)}=\prod_{\pp\in M_K^*}q_\pp^{v_\pp(c(a+b))}=|a+b|^2\prod_{\pp\in M_K^*\setminus S}q_\pp^{v_\pp(c)}\prod_{\pp\in S}q_\pp^{v_\pp(c)}$$ 
$$
   > |a+b|^2\prod_{\pp\in S}q_\pp^{-h_K}=|a+b|^2C^2.
   $$
Thus, $|u-v|>C|a+ b|$. Analogously, we obtain $|u-v|>C|a-b|$.
Finally, using the triangle inequality we obtain that
$$
2|a|\leq |a+b|+|a-b|< \frac{2}{C}|u-v|\leq \frac{4}{C}\max\{|u|,|v|\}
$$
Hence, $|a|< \frac{2}{C}\max\{|u|,|v|\}$. Proceeding in an analogous way we also have that $|b|<\frac{2}{C}\max\{|u|,|v|\}$. Combining these with the previous inequality $\max\{|u|,|v|\}>\frac{2}{C}$. We infer that 
$\max\{\frac{2}{C},|a|,|b|\}\leq \max\{|u|^2,|v|^2\}$.  \endproof
\section{Definability of the square function}
We are ready to prove the main theorem of the paper. The proof follows the same pattern as before, though much harder. Recall that 
\begin{itemize}
\item $p_1,\dots, p_k$ are the prime elements in $\mathbb{Z}$ below the primes of $S$.
\item and let $J=\{0,\dots, 6\}^k$.
\end{itemize}

\begin{theorem}\label{mainthm}
	The set 
	$$
	SQ=\{(x,y)\in \O_{K,S}\colon y=x^2\}
	$$
	is positive-existentially definable in the   $\ldv$-structure $\O_{K,S}.$
\end{theorem}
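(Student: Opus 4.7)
The plan is to exhibit an explicit positive-existential $\mathcal{L}_{\Div}$-formula $\psi_{SQ}(x,y)$ and check that it defines $SQ$. The backbone is the trivial algebraic identity
$$
y - \varepsilon^2 \;=\; (x - \varepsilon)(x + \varepsilon) \;+\; (y - x^2),
$$
so that a divisibility $x - \varepsilon \mid y - \varepsilon^2$ in $\O_{K,S}$ automatically yields $x - \varepsilon \mid y - x^2$; if moreover $\varepsilon$ is a unit of infinite order whose ``height'' dominates those of $x$ and $y$ (via the machinery of Section~4), the resulting quotient $(y - x^2)/(x - \varepsilon)$ must vanish. The auxiliary value $\varepsilon^2$ is nameable in a positive-existential formula because $\varepsilon$ is a unit, via Proposition \ref{produnits}.

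Concretely, $\psi_{SQ}(x,y)$ existentially quantifies over units $\varepsilon, w \in \O_{K,S}^\times$ and asserts (up to a disjunctive side clause peeling off the degenerate case $x = 0 \wedge y = 0$): (i) $\psi_{\neq}(x)$ and $\psi_{\neq}(y)$ via Lemma \ref{dif}; (ii) $\varphi_\infty(\varepsilon)$ and $\prd_u(\varepsilon, \varepsilon, w)$, certifying that $\varepsilon$ has infinite order and that $w = \varepsilon^2$; (iii) ``largeness'' divisibilities $(x-1), (x+1), (y-1), (y+1), q \mid \varepsilon - 1$, with $q$ the rational prime fixed before Lemma \ref{lemC2}, placing $\varepsilon$ into the hypotheses of Lemma \ref{lemC2} for both $x$ and $y$; and (iv) for every pair $(\alpha, i) \in J \times \{1, \ldots, u_K + 1\}$, an $\ass$-type divisibility condition patterned directly on Proposition \ref{produnits}, which after the eventual pigeonhole in $\O_K^\times$ encodes that $(x - \varepsilon)$ divides $(y - w)$ in $\O_{K,S}$ with $S$-adic valuations of the quotient bounded uniformly in $h_K$ and $|S|$. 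Edge cases where $x$ or $y$ equals $\pm 1$ (which would make one of the divisibilities trivially degenerate) can be absorbed by additional small disjunctive clauses or by replacing $x \pm 1$ with carefully chosen shifts.

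In the forward direction, given $y = x^2$, Lemma \ref{lemunitinforder} applied to the nonzero element $(x-1)(x+1)(y-1)(y+1) q$ supplies $\varepsilon$, Proposition \ref{produnits} supplies $w = \varepsilon^2$, and the identity $y - w = (x - \varepsilon)(x + \varepsilon)$ makes every $\ass$-condition hold trivially (with associating unit $1$). In the backward direction, the pigeonhole step of Proposition \ref{produnits}---$u_K + 1$ indices pigeonholed into the $u_K$-element group $\O_K^\times$---extracts from clause (iv) a genuine factorization $y - w = (x - \varepsilon)\,c$ with $c \in \O_{K,S}$ whose $S$-adic valuations are uniformly bounded. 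Combining with the identity gives $y - x^2 = (x - \varepsilon)\bigl(c - (x + \varepsilon)\bigr)$. The representations $x = a/b$, $y = a'/b'$, $\varepsilon = u/v$ from Lemma \ref{keylemma}, the product formula, and Lemma \ref{lemC2} show that $|x - \varepsilon|$ dominates $|y - x^2|$ by a definite multiplicative factor, while the $S$-adic bound on $c - (x + \varepsilon)$ is uniform; these two estimates are incompatible with any nonzero value of $y - x^2$, so $y = x^2$.

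The principal obstacle is clause (iv) and its backward-direction use: the $\ass$-conditions must be strong enough to force $S$-adic control on the quotient $(y - w)/(x - \varepsilon)$---so that the archimedean bound of Lemma \ref{lemC2} translates into an actual contradiction---while remaining trivially satisfiable in the forward direction. This balancing act is significantly more delicate than in Proposition \ref{produnits} because the divisor $x - \varepsilon$ is not a unit; we cannot simply divide through, and we must simultaneously manage archimedean valuations (via Lemma \ref{lemC2}), non-archimedean valuations outside $S$ (via divisibility), and $S$-adic valuations (via the multi-index/pigeonhole mechanism). This orchestration is the ``technical heart'' of the paper alluded to in Section~1.
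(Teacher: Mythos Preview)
Your overall architecture matches the paper's: the algebraic identity, the large unit $\varepsilon$ supplied by Lemma~\ref{lemunitinforder}, $\prd_u$ to name powers of $\varepsilon$, and a final archimedean-vs.-$S$-adic contradiction via Lemma~\ref{keylemma} and Lemma~\ref{lemC2}. Two concrete points, however, separate your sketch from a proof, and the paper handles both differently.

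\textbf{The exponent on $\varepsilon$ is too small.} You use $x-\varepsilon\mid y-\varepsilon^2$; the paper uses $p^\alpha x\pm\varepsilon^{17}\mid p^{2\alpha}y-\varepsilon^{34}$. This is not cosmetic. Writing $x=a/b$, $y=c/d$, $\varepsilon=u/v$ as in Lemma~\ref{keylemma}, Lemma~\ref{lemC2} only gives $|a|,|b|,|c|,|d|\le\max\{|u|^2,|v|^2\}$, so the upper bound on $|b^2c-a^2d|$ is of order $\max\{|u|^8,|v|^8\}$. With divisor $av\pm ub$ (your power $1$) the lower bound you could extract is of order $\max\{|u|,|v|\}$ divided by $|a|$ or $|b|$, which after the substitutions is \emph{smaller} than the upper bound---no contradiction. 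The paper needs the $17$th power precisely so that $C^3|u|^{17}/|a|\ge\max\{|u|^{9},|v|^{9}\}$ beats $\max\{|u|^8,|v|^8\}$.

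\textbf{Clause (iv) is the wrong mechanism.} You propose $\ass$-conditions plus a pigeonhole in $\O_K^\times$ to control the $S$-adic valuations of the quotient $(y-w)/(x-\varepsilon)$. The paper does \emph{not} do this. Instead it imposes the plain divisibilities $p^\alpha x+\varepsilon^{17}\mid p^{2\alpha}y-\varepsilon^{34}$ and $p^\alpha x-\varepsilon^{17}\mid p^{2\alpha}y-\varepsilon^{34}$ simultaneously (both signs), then uses the multi-index $\alpha\in J$ not for pigeonholing but to guarantee $v_\pp(p^\alpha x)\ne 0$, $v_\pp(p^{2\alpha}y)\ne 0$, $v_\pp(p^\alpha x)\ne v_\pp(\varepsilon^{17})$ for all $\pp\in S$ (Claim~A). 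These nondegeneracy conditions feed a case analysis (Claim~B) bounding $v_\pp(av^{17}\pm u^{17}b)$ from above; the product formula then bounds $|e^\pm|$ from below, and the \emph{triangle-inequality trick} $|e^+||av^{17}+bu^{17}|+|e^-||av^{17}-bu^{17}|\ge 2C^3|u^{17}|/|a|$ (and its companion for $v,b$) delivers the contradiction. No $\ass$, no pigeonhole, no attempt to bound the quotient directly---the two signs together do the work that your clause (iv) was meant to do.
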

\begin{proof} We claim that the following formula $\varphi_{\rm sq}(x,y)$ defines the set $SQ$.
\begin{equation}
(x=0\wedge y=0)\vee \bigvee_{\substack{\alpha\in J}}(x=\pm p^{-\alpha}\wedge y=p^{-2\alpha})
\end{equation}

\begin{equation}
\vee  \exists \varepsilon(\varepsilon \dib 1 \wedge \varphi_\infty(\varepsilon)) \wedge \psi(x,y)
\end{equation}

where $\psi(x,y)$ is the conjunction of the following formulas:
$$
\psi_1(x,\varepsilon): \bigwedge_{\substack{\alpha\in J}} p^{\alpha}x\pm 1\dib \varepsilon-1
$$

$$
\psi_2(y,\varepsilon): \bigwedge_{\substack{\alpha\in J}} p^{2\alpha}y\pm 1\dib \varepsilon-1
$$

$$
\psi_3(\varepsilon): q \dib \varepsilon-1
$$

$$
\psi_4(\varepsilon_2): \varepsilon_1=\varepsilon^{17}
$$

$$
\psi_5(x,y,\varepsilon_1): \bigwedge_{\substack{\alpha\in J}} p^{\alpha}x\pm \varepsilon_{1} \dib p^{2\alpha}y-\varepsilon_1^2.
$$
First we show that if $y=x^2$, then $\varphi_{\rm sq}(x,y)$ holds in $\O_{K,S}$. If $x=0$ or $x=\pm p^{-\alpha}$ for $\alpha\in I$, then clearly $\varphi_{\rm sq}(x,y)$ holds. So we may assume $x\ne 0$ and  $x\ne \pm p^{-\alpha}$ for any $\alpha\in I$, but in this case the result follows immediately from Lemma \ref{lemunitinforder}.
\\

Let us now assume that $\varphi_{\rm sq}(x,y)$ holds. Without loss of generality we may also assume that $x\ne0,$ and  $x\ne \pm p^{-\alpha}$ for any $\alpha\in I$. Fix $\varepsilon$  so that $\psi(x,y)$ holds. 

We will break the rest of the proof into several Claims.\\

\noindent\textbf{Claim A:} There exists a multi index $\alpha\in J$ such that $v_\pp(p^\alpha x)\ne 0,$ $v_\pp(p^{2\alpha} y)\ne 0$ and  $v_\pp(p^\alpha x)\ne v_\pp (\varepsilon_1)$ for all $\pp\in S$. \\
\textit{Proof of Claim A.}
Given $x,y \in \O^*_{K,S}$, for each  $1\leq i \leq k$,  choose 
$$
\alpha_i\in \{0,\dots,6\}\setminus \left(\left\{\frac{-v_\pp(x)}{e(\pp:p_i)}\colon \pp \dib p_i\right\}\cup\left\{\frac{-v_\pp(y)}{2e(\pp:p_i)}\colon \pp  \dib   p_i\right\}\cup \left\{\frac{-v_\pp(\varepsilon_1)}{e(\pp:p_i)}\colon \pp  \dib   p_i\right\} \right),
$$
arbitrarily. Observe that this is possible since there are at most two primes
above any of the $p_i$’s. It follows that $\alpha:=(\alpha_1,\dots,\alpha_k)$ is as required. $\hfill\square$

From now on, in order to simplify notation, we will write $X$ and $Y$ instead of $p^\alpha x$ and $p^{2\alpha}y$. We also write $X=\frac{a}{b}, Y=\frac{c}{d}$ and $\varepsilon=\frac{u}{v}$ where $a,b, c,d, $ and $u,v$ are chosen as in Lemma \ref{keylemma}.

Since $\psi_1(x,\varepsilon)$, $\psi_2(y,\varepsilon)$ and $\psi_3(\varepsilon)$ hold, it follows  from Lemma \ref{lemC2} that
\begin{equation}\label{lasteq}
\begin{split}
\max\{\frac{2}{C},|a|,|b|,|c|,|d|\}\leq \max\{|u|^2,|v|^2\}
\end{split}
\end{equation}


From $\psi_5$,  we have 
$$
X\pm  \varepsilon^{17}\ |\ Y-\varepsilon^{34},
$$ 
and  trivially
$$
X\pm  \varepsilon^{17}\ |\  X^2-\varepsilon^{34},
$$ 
hence $X\pm \varepsilon^{17} \  | \ Y-X^2$. 
Aiming towards a contradiction, suppose that $Y\ne X^2$.  It follows, from the previous equation, that 
\begin{equation}\label{eqtocontradict}
av^{17}\pm u^{17}b\ \dib \ b^2c-a^2d,
\end{equation}

We will obtain our desired contradiction by showing that Equation \eqref{eqtocontradict} does not hold. Intuitively, the left hand size is much bigger than the right hand side.

Let $e^{\pm}:=\frac{b^2c-a^2d}{av^{17}\pm u^{17}b}$. First we shall obtain an upper bound for $|b^2c-a^2d|$ and a lower bound for its poles. For this, notice that $|b^2c-a^2d|\leq 2\max\{|a|^3,|b|^3,|c|^3,|d|^3\}$ and also that 
\begin{equation}\label{eqpoles}
    v_\pp(b^2c-a^2d)\geq \min\{v_\pp(b^2c), v_\pp(a^2d)\}>-2h_K \ \ \ {\rm for}\ \pp\in S.
\end{equation}


Next we shall obtain  lower bounds for  $|e^{\pm}|$.   In order to do this, we need to find  lower bounds for the poles of $e^{\pm}$. In turn, this requires lower bounds for the poles of $b^2c-a^2d$ and upper bounds for the zeros of $av^{17}\pm u^{17}b$ in $S$. 

\bigskip
\noindent\textbf{Claim B:} For each $\pp\in S$, $v_\pp(av^{17}\pm u^{17}b)\leq h_K+v_\pp(a)+v_\pp(b)$.  

\noindent\textit{Proof of Claim B.}  Fix $\pp\in S$. Notice that, by our assumptions, $v_\pp(av^{17}\pm u^{17}b)=\min\{v_\pp(av^{17}), v_\pp(bu^{17})\}$.
Now we proceed by cases:

\begin{itemize}
   
\item If $v_\pp(a)>0$ and $v_\pp(v)>0$, then $v_\pp(b)=0$ and $v_\pp(u)\leq 0$. Hence, $\min\{v_\pp(av^{17}), v_\pp(bu^{17})\}\leq v_\pp(bu^{17})\leq 0$.

\item If $v_\pp(a)>0$ and $v_\pp(v)= 0$, then $v_\pp(b)=0$. Hence, $\min\{v_\pp(av^{17}), v_\pp(bu^{17})\}\leq v_\pp(av^{17})= v_\pp(a).$

\item If $v_\pp(a)=0$ and $v_\pp(v)>0$, then $v_\pp(b)>0$ and $v_\pp(u)\leq 0$. Hence, $\min\{v_\pp(av^{17}), v_\pp(bu^{17})\}\leq v_\pp(bu^{17})\leq v_\pp(b).$

\item If $v_\pp(a)=0$ and $v_\pp(v)=0$, then 
$\min\{v_\pp(av^{17}), v_\pp(bu^{17})\}\leq v_\pp(av^{17})=0.$

\item If $v_\pp(a)<0 $ and $v_\pp(v)>0$, then $v_\pp(u)\leq 0$ and $v_\pp(b)\geq 0$. Hence,
 $\min\{v_\pp(av^{17}), v_\pp(bu^{17})\}\leq v_\pp(bu^{17})\leq v_\pp(b).$

\item If $v_\pp(a)<0$ and $v_\pp(v)=0$, then  $\min\{v_\pp(av^{17}), v_\pp(bu^{17})\}\leq v_\pp(av^{17})<0$.

\end{itemize}

Thus it follows that   $$v_\pp(av^{17}\pm u^{17}b)\leq \max\{0, v_\pp(a),v_\pp(b)\}\leq h_K+v_\pp(a)+v_\pp(b).$$
$\hfill\square$

Combining Equation \ref{eqpoles} and Claim B, we infer that 
$$v_\pp(e^{\pm})=v_\pp(b^2c-a^2d)-v_\pp(av^{17}\pm bu^{17})\geq -2h_K-h_K-v_\pp(a)-v_\pp(b).$$
 
Using the product formula once more we have
$$
|e^{\pm}|^2=\prod\limits_{\pp\in M_K^*} q_\pp^{v_\pp(e^{\pm})}\geq \prod\limits_{\pp\in S} q_\pp^{-3h_K-v_\pp(a)-v_\pp(b)}\geq $$  $$ \left (\prod\limits_{\pp\in M_K^*\setminus S} q_\pp^{-v_\pp(a)-v_\pp(b)}\right ) \prod\limits_{\pp\in S} q_\pp^{-3h_K-v_\pp(a)-v_\pp(b)}=\frac{C^6}{|ab|^2}. 
$$

Observe that, by the triangle inequality,  

 $$2|b^2c-a^2d|=|e^+||av^{17}+ bu^{17}|+|e^-||av^{17}- bu^{17}|\geq 2\frac{C^3|av^{17}|}{|ab|}$$

and 

 $$2|b^2c-a^2d|=|e^+||av^{17}+ bu^{17}|+|e^-||-av^{17}+ bu^{17}|\geq 2\frac{C^3|bu^{17}|}{|ab|}$$.

 Thus, we obtain that $|b^2c-a^2d|\geq \max\left\{\frac{C^3|u^{17}|}{|a|},\frac{C^3|v^{17}|}{|b|}\right \}.$

 Using Equation \ref{lasteq}, it follows that 

$$
\max\left\{\frac{C^3|u^{17}|}{|a|},\frac{C^3|v^{17}|}{|b|}\right \}\geq \max\{|u^{9}|,|v|^{9}\}
$$

On the other hand, $|b^2c-a^2d|\leq 2\max\{|a|^3,|b|^3,|c|^3,|d|^3\}\leq \max\{|u|^8,|v|^8\}.$ Therefore,

$$
\max\{|u|^8,|v|^8\}\geq \max\{|u^{9}|,|v|^{9}\}
$$
which is a contradiction. This concludes the proof of the Theorem. \end{proof}

As a corollary we obtain the undecidability of $\rm{Th}^{+\exists }(\O_{K,S})$.
\begin{corollary}
	The positive-existential theory of the $\ldv$-structure $\O_{K,S}$ is undecidable.
\end{corollary}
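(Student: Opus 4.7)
The plan is to derive undecidability formally from Theorem~\ref{mainthm} together with the known undecidability of Hilbert's Tenth Problem (HTP) for rings of $S$-integers in imaginary quadratic number fields. The first step is to upgrade squaring to multiplication on the full ring $\O_{K,S}$. Since $\O_{K,S}$ is a characteristic-zero domain, the polarization identity $(x+y)^2 = (x-y)^2 + 4xy$ uniquely determines $xy$ from $x, y$, so the graph $\prd = \{(x,y,z) : z = xy\}$ of multiplication is defined by
$$
\exists w, a, b\,\bigl[\, y + w = x\ \wedge\ \SQ(x+y, a)\ \wedge\ \SQ(w, b)\ \wedge\ a = b + z+z+z+z\,\bigr],
$$
which is positive-existential in $\ldv$ by Theorem~\ref{mainthm}: $x-y$ is encoded by the witness $w$ with $y+w=x$, and $4z$ is written as the $\ldv$-term $z+z+z+z$.

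With $+$ primitive in $\ldv$ and $\cdot$ now positive-existentially definable, one translates any polynomial system $f_1(\bar x) = \cdots = f_m(\bar x) = 0$ over $\mathbb{Z}$ into a positive-existential $\ldv$-sentence in the standard way: parse each $f_i$ into a tree of binary additions and multiplications, introduce fresh existential witnesses for all internal nodes, and encode each node either by the primitive $+$-atomic formula or by the multiplication formula above; integer coefficients $n\in\mathbb{Z}$ are built from $0$, $1$, $+$, and additive-inverse witnesses $w$ satisfying $y+w=0$. This yields a many-one reduction from HTP for $\O_{K,S}$ to the positive-existential $\ldv$-theory of $\O_{K,S}$.

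The proof closes by invoking the undecidability of HTP for $\O_{K,S}$ itself, which for imaginary quadratic $K$ goes back to Denef's theorem for the case $S=\emptyset$ and has been extended to finite nonempty $S$ by positive-existentially interpreting $\O_K$ inside $\O_{K,S}$ in the ring language (Shlapentokh and others); equivalently, one may positive-existentially interpret $(\mathbb{Z};+,\cdot)$ directly inside $\O_{K,S}$ and appeal to the MRDP theorem. There is essentially no new technical obstacle in this corollary, as it is a formal consequence of Theorem~\ref{mainthm}; the only thing worth double-checking is that the translation of Step~2 preserves positive-existentiality, which is automatic because the positive-existential fragment is closed under $\wedge$, $\vee$, and $\exists$.
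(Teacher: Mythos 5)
Your proposal is correct and follows essentially the same route as the paper: the paper's proof is a one-line appeal to the undecidability of the positive-existential ring-theory of $\O_{K,S}$ (citing Shlapentokh), combined implicitly with the positive-existential definability of multiplication from Theorem~\ref{mainthm}. You simply spell out the standard intermediate steps (polarization to recover multiplication from squaring, and the translation of integer polynomial systems into $\ldv$-formulas), all of which are routine and sound.
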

\proof This follows immediately from the undecidability of $\O_{K,S}$ in the language of rings (see \cite{S07}). \endproof

%
%

\addcontentsline{toc}{section}{References}

\noindent Natalia Hormaz\'abal\\
Email: nhormazabal2017@udec.cl\\

\noindent Carlos A. Mart\'inez-Ranero\\
Email: cmartinezr@udec.cl\\
Homepage: www2.udec.cl/~cmartinezr\\

\noindent Same address: \\
Universidad de Concepci\'on, Concepci\'on, Chile\\
Facultad de Ciencias F\'isicas y Matem\'aticas\\
Departamento de Matem\'atica\\

\end{document}